\newtheorem{theorem}{Theorem}
\newtheorem{proposition}[theorem]{Proposition}
\newtheorem{lemma}[theorem]{Lemma}
\newtheorem{example}[theorem]{Example}
\newtheorem{corollary}[theorem]{Corollary}
\newtheorem{assertion}[theorem]{Assertion}
\newtheorem{definition}[theorem]{Definition}
\renewcommand{\subsection}{\@startsection{subsection}{1}
{0pt}{3.25ex plus 1ex minus.2ex}{-1em}{\normalfont\normalsize\bf}}
\begin{document}

\title{{\bf Enveloping norms on the spaces of regularly 
$\cal{P}$-operators in Banach lattices}}
\maketitle
\author{\centering{{Safak Alpay$^{1}$, Eduard Emelyanov$^{2}$, Svetlana Gorokhova $^{3}$\\ 
\small $1$ Department of Mathematics, Middle East Technical University, 06800 Ankara, Turkey, safak@metu.edu.tr\\ 
\small $2$ Sobolev Institute of Mathematics, Acad. Koptyug avenue, 4, 630090, Novosibirsk, Russia, emelanov@math.nsc.ru\\ 
\small $3$ Uznyj matematiceskij institut VNC RAN, Vatutin str., 53, 362025, Vladikavkaz, Russia, lanagor71@gmail.com}
\abstract{We introduce and study the enveloping norms of regularly 
$\cal{P}$-operators between Banach lattices 
$E$ and $F$, where $\cal{P}$ is a subspace of the space
$\text{\rm L}(E,F)$ of continuous operators from $E$ to $F$.
We prove that if $\cal{P}$ is closed in $\text{\rm L}(E,F)$
in the operator norm then the regularly $\cal{P}$-operators forms a Banach space under
the enveloping norm. Conditions providing that regularly $\cal{P}$-operators forms 
a Banach lattice under the enveloping norm are given.}
\vspace{5mm}

{\bf Keywords:} Banach lattice, regularly $\cal{P}$-operator, enveloping norm
{\bf MSC2020:} {\normalsize 46B25, 46B42, 46B50, 47B60}

}}
\bigskip
\bigskip

Throughout the paper, all vector spaces are real, all operators are linear, 
letters $E$ and $F$ denote Banach latices, 
$B_E$ denotes the closed unit ball of $X$,
$\text{\rm L}(E,F)$ (\text{\rm K}(E,F); \text{\rm W}(E,F)) denotes
the space of all continuous 
(resp. compact; weakly compact) operators 
from $E$ to $F$, and $E_+$ the positive cone of $E$. 
Let ${\cal P}\subseteq\text{\rm L}(E,F)$. We call elements of ${\cal P}$ by
${\cal P}$-operators and say that ${\cal P}$-operators satisfy the 
{\em domination property} if $0\le S\le T\in{\cal P}$ implies
$S\in{\cal P}$. An operator $T\in\text{\rm L}(E,F)$ is called
${\cal P}$-{\em dominated} if $\pm T\le U$ for some  $U\in{\cal P}$.
Under the assumption ${\cal P}\pm{\cal P}\subseteq{\cal P}\ne\emptyset$,
${\cal P}$-operators satisfy the domination property iff each 
${\cal P}$-dominated operator lies in ${\cal P}$. 
We refer to \cite{AlBu,GM,Mey} for unexplained terminology and notations.

\subsection{}
In the present paper we continue the investigation of regularly ${\cal P}$-operators 
following \cite{Emel,AEG3} and introduce enveloping norms on spaces 
of regularly ${\cal P}$-operators.
An operator $T:E\to F$ is called {\em regular} 
if $T=T_1-T_2$ for some $T_1,T_2\in\text{\rm L}_+(E,F)$. 
We denote by $\text{\rm L}_r(E,F)$ 
(resp. $\text{\rm L}_{ob}(E,F)$, $\text{\rm L}_{oc}(E,F)$) the ordered space of all regular 
(resp. order bounded, \text{\rm o}-continuous) operators 
in $\text{\rm L}(E,F)$.  The space $\text{\rm L}_r(E,F)$ is a Banach space under 
the {\em regular norm} \cite[Prop.1.3.6]{Mey}
\begin{equation}\label{def of regular norm}
   \|T\|_r:=\inf\{\|S\|:\pm T\le S\in\text{\rm L}(E,F)\}.
\end{equation}
Furthermore, for every $T\in\text{\rm L}_r(E,F)$,
\begin{equation}\label{regular norm 1}
   \|T\|_r=\inf\{\|S\|: S\in\text{\rm L}(E,F), |Tx|\le S|x|\ \forall x\in E\}\ge\|T\|,
\end{equation} 
and if $F$ is Dedekind complete then $(\text{\rm L}_r(E,F),\|\cdot\|_r)$ is a Banach lattice
with $\|T\|_r=\|~|T|~\|$ for each $T\in\text{\rm L}_r(E,F)$ \cite[Prop.1.3.6]{Mey}.
The following notion was introduced in \cite[Def.2]{Emel} 
(cf. also \cite[Def.1.5.1]{AEG3}). 

\begin{definition}\label{rP-operators}{\em
Let ${\cal P}\subseteq\text{\rm L}(E,F)$. An operator $T:E\to F$ is called 
a {\it regularly} ${\cal P}$-{\it operator}
(shortly, an \text{\rm r}-${\cal P}$-{\it operator}) if $T=T_1-T_2$ 
with $T_1,T_2\in{\cal P}\cap\text{\rm L}_+(E,F)$.
We denote by: 
\begin{enumerate}[]
\item 
${\cal P}(E,F):= {\cal P}$ the set of all ${\cal P}$-{\em operators} 
in $\text{\rm L}(E,F)$;  
\item 
${\cal P}_r(E,F)$ the set of all regular operators in ${\cal P}(E,F)$;  
\item 
$\text{\rm r-}{\cal P}(E,F)$ the set of 
all \text{\rm r}-${\cal P}$-operators in $\text{\rm L}(E,F)$.
\end{enumerate}
}
\end{definition}

\begin{assertion}\label{prop elem}
{\rm (\cite[Prop.1.5.2]{AEG3})}
Let ${\cal P}\subseteq\text{\rm L}(E,F)$, ${\cal P}\pm{\cal P}\subseteq{\cal P}\ne\emptyset$, 
and $T\in\text{\rm L}(E,F)$. Then the following holds.
\begin{enumerate}[\em (i)]
\item 
$T$ is an {\em r-}${\cal P}$-operator iff $T$ is a ${\cal P}$-dominated ${\cal P}$-operator.  
\item 
Suppose ${\cal P}$-operators satisfy the domination property and the modulus $|T|$ exists 
in $\text{\rm L}(E,F)$. Then $T$ is an \text{\rm r}-${\cal P}$-operator iff  $|T|\in{\cal P}$.
\end{enumerate}
\end{assertion}
\noindent
The next fact was established in \cite[Prop.1.5.3]{AEG3}.

\begin{assertion}\label{vect lat}
$\text{\rm r-}{\cal P}(E,F)$ is a Dedekind complete vector lattice if
$F$ is Dedekind complete and ${\cal P}$ is a subspace of $\text{\rm L}(E,F)$
satisfying the domination property. 
\end{assertion}

\subsection{}
In the definition (\ref{def of regular norm}) of the regular norm,
we replace $\text{\rm L}(E,F)$ by an arbitrary subspace ${\cal P}$ 
of $\text{\rm L}(E,F)$ as follows:
\begin{equation}\label{enveloping norm}
   \|T\|_{\text{\rm r-}{\cal P}}:=\inf\{\|S\|:\pm T\le S\in{\cal P}\} \ \ 
   \ \ (T\in\text{\rm r-}{\cal P}(E,F)).
\end{equation}

\begin{lemma}\label{Enveloping P-norm}
For a subspace ${\cal P}$ of $\text{\rm L}(E,F)$, the formula 
$(\ref{enveloping norm})$ defines a norm on $\text{\rm r-}{\cal P}(E,F)$,
called the {\rm enveloping norm}. Moreover,
\begin{equation}\label{Enveloping P-norm 2}
   \|T\|_{\text{\rm r-}{\cal P}}=
   \inf\{\|S\|: S\in{\cal P}\ \&\ (\forall x\in E)\ |Tx|\le S|x|\}
   \ \ \ (T\in\text{\rm r-}{\cal P}(E,F)).
\end{equation}
If ${\cal P}_1$ is a subspace of ${\cal P}$ then
\begin{equation}\label{Enveloping P-norm 1}
   \|T\|_{\text{\rm r-}{\cal P}_1}\ge\|T\|_{\text{\rm r-}{\cal P}}\ge\|T\|_r\ge\|T\| 
   \ \ \ \ \ (\forall\ T\in\text{\rm r-}{\cal P}_1(E,F)).
\end{equation}
\end{lemma}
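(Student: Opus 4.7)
The plan is to verify the four norm axioms for the quantity defined in (\ref{enveloping norm}), then establish the alternative formula (\ref{Enveloping P-norm 2}), and finally read off the chain (\ref{Enveloping P-norm 1}) directly from the definitions. The single observation that glues everything together is elementary: whenever $S\in\text{\rm L}(E,F)$ satisfies $\pm T\le S$, adding the two inequalities forces $S\ge 0$, and for an arbitrary $x=x^+-x^-\in E$ one obtains $|Tx|\le|Tx^+|+|Tx^-|\le Sx^++Sx^-=S|x|$, whence $\|Tx\|\le\|S\|\|x\|$ and therefore $\|T\|\le\|S\|$.

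I would begin by checking that the infimum in (\ref{enveloping norm}) is taken over a nonempty set of finite norms: writing $T=T_1-T_2$ with $T_1,T_2\in{\cal P}\cap\text{\rm L}_+(E,F)$, the subspace hypothesis on ${\cal P}$ gives $T_1+T_2\in{\cal P}$, and clearly $\pm T\le T_1+T_2$. Homogeneity then follows from the equivalence $\pm(\alpha T)\le S\Leftrightarrow\pm T\le S/|\alpha|$ for $\alpha\ne 0$, while subadditivity is obtained by choosing, for given $\varepsilon>0$ and $i=1,2$, elements $S_i\in{\cal P}$ with $\pm T_i\le S_i$ and $\|S_i\|\le\|T_i\|_{\text{\rm r-}{\cal P}}+\varepsilon$; then $S_1+S_2\in{\cal P}$ dominates $\pm(T_1+T_2)$. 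Positive definiteness is where the key observation enters: it yields $\|T\|\le\|T\|_{\text{\rm r-}{\cal P}}$, so vanishing of the enveloping norm forces $T=0$.

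For (\ref{Enveloping P-norm 2}), the key observation shows that every $S$ admissible in (\ref{enveloping norm}) is admissible in the set on the right-hand side of (\ref{Enveloping P-norm 2}), giving one inequality. For the reverse direction, if $S\in{\cal P}$ satisfies $|Tx|\le S|x|$ for every $x\in E$, then restricting to $x\in E_+$ yields $\pm Tx\le|Tx|\le Sx$, i.e.\ $\pm T\le S$ in the operator order, so $S$ is admissible in (\ref{enveloping norm}).

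Finally, the chain (\ref{Enveloping P-norm 1}) is immediate from the definitions: the first inequality reflects that the admissible set $\{S\in{\cal P}_1:\pm T\le S\}$ is contained in $\{S\in{\cal P}:\pm T\le S\}$, so the infimum can only be larger; the second inequality is the same principle applied to ${\cal P}\subseteq\text{\rm L}(E,F)$; and $\|T\|_r\ge\|T\|$ is the rightmost inequality in (\ref{regular norm 1}). No serious obstacle arises — the entire argument is bookkeeping once the pointwise lattice estimate $|Tx|\le S|x|$ derived from $\pm T\le S$ is available, and that estimate is the only place where anything beyond linearity and the subspace structure of ${\cal P}$ is used.
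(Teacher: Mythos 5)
Your proposal is correct and follows essentially the same route as the paper: the same $\varepsilon$-argument for subadditivity, the same pointwise estimate $|Tx|\le S|x|$ obtained by splitting $x=x^+-x^-$, and the same converse restriction to $E_+$ for formula (\ref{Enveloping P-norm 2}). You simply spell out the routine verifications (nonemptiness of the admissible set, homogeneity, positive definiteness via $\|T\|\le\|T\|_{\text{\rm r-}{\cal P}}$, and the inclusion of admissible sets giving (\ref{Enveloping P-norm 1})) that the paper dismisses as not requiring explanation.
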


\begin{proof}
Only the triangle inequality and the 
formula (\ref{Enveloping P-norm 2}) require explanations.

(A) Let $T_1,T_2\in\text{\rm r-}{\cal P}(E,F)$ and $\varepsilon>0$. Pick
$S_1,S_2\in{\cal P}$ with $\pm T_1\le S_1$, $\pm T_2\le S_2$,
$\|S_1\|\le\|T_1\|_{\text{\rm r-}{\cal P}}+\varepsilon$, and 
$\|S_2\|\le\|T_2\|_{\text{\rm r-}{\cal P}}+\varepsilon$.
Then $\pm(T_1+T_2)\le S_1+S_2\in{\cal P}$, and
$
    \|T_1+T_2\|_{\text{\rm r-}{\cal P}}\le\|S_1+S_2\|\le\|S_1\|+\|S_2\|\le
    \|T_1\|_{\text{\rm r-}{\cal P}}+\|T_2\|_{\text{\rm r-}{\cal P}}+2\varepsilon.
$
Since $\varepsilon>0$ is arbitrary, 
$\|T_1+T_2\|_{\text{\rm r-}{\cal P}}\le
\|T_1\|_{\text{\rm r-}{\cal P}}+\|T_2\|_{\text{\rm r-}{\cal P}}$.

(B) Denote the right side of (\ref{Enveloping P-norm 2}) by $R(T)$. 
If $\pm T\le S\in{\cal P}$ then 
$$
   \pm Tx=\pm(T(x_+)-T(x_-))=\pm T(x_+)\mp T(x_-)\le S(x_+)+S(x_-)=S|x|
$$
for all $x\in E$. Then $|Tx|\le S|x|$ for all $x\in E$, and hence 
$\|T\|_{\text{\rm r-}{\cal P}}\ge R(T)$.

If $S\in{\cal P}$ satisfies $|Tx|\le S|x|$ for all $x\in E$ then,
for all $y\in E_+$, $|Ty|\le Sy$ and consequently $\pm Ty\le Sy$.
Therefore $\pm T\le S$, and hence $R(T)\ge\|T\|_{\text{\rm r-}{\cal P}}$.
\end{proof}
\noindent
Let ${\cal P}$ be a subspace of $\text{\rm L}(E,F)$. 
What are the conditions for ${\cal P}_r(E,F)=\text{\rm r-}{\cal P}(E,F)$? Trivially it happens
for ${\cal P}=\text{\rm L}(E,F)$. The following proposition shows that if $\text{\rm L}(E,F)$
is non-trival then we can always find a closed subspace ${\cal P}$ such that 
$\text{\rm r-}{\cal P}(E,F)$ is a proper subspace of ${\cal P}_r(E,F)$.
More interesting case of the question is included in Example~\ref{Krengel}
below.

\begin{proposition}
Let $\dim(\text{\rm L}(E,F))>1$. Then there exists a one-dimensional subspace 
${\cal P}$ of $\text{\rm L}(E,F)$
such that $\text{\rm r-}{\cal P}(E,F)\subsetneqq{\cal P}_r(E,F)$.
\end{proposition}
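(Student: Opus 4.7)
The plan is to build a one-dimensional subspace ${\cal P} = \mathbb{R} T_0$ where $T_0 \in \text{\rm L}_r(E,F)$ is regular but incomparable with $0$ (i.e.\ $T_0 \not\ge 0$ and $-T_0 \not\ge 0$). Granting this, every element of ${\cal P}$ is a scalar multiple of the regular operator $T_0$, so ${\cal P}_r(E,F) = {\cal P}$; meanwhile $\lambda T_0 \ge 0$ forces $\lambda = 0$ (otherwise one of $T_0, -T_0$ would be positive), whence ${\cal P} \cap \text{\rm L}_+(E,F) = \{0\}$ and therefore $\text{\rm r-}{\cal P}(E,F) = \{0\} \subsetneqq {\cal P}_r(E,F)$, as required.

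To produce such a $T_0$, I will first exhibit two linearly independent positive operators $P_1, P_2 \in \text{\rm L}_+(E,F)$. The hypothesis $\dim\text{\rm L}(E,F) > 1$ excludes $\dim E = \dim F = 1$, so either $\dim F \ge 2$, or else $\dim F = 1$ and $\dim E \ge 2$. Since the positive cone spans any nontrivial Banach lattice, I fix $\phi_0 \in E^*_+ \setminus \{0\}$ and $y_0 \in F_+ \setminus \{0\}$ and set $P_1(x) := \phi_0(x)\, y_0$. In the first case, I pick $y_1 \in F_+$ linearly independent of $y_0$ and define $P_2(x) := \phi_0(x)\, y_1$; in the second case, I pick $\phi_1 \in E^*_+$ linearly independent of $\phi_0$ and define $P_2(x) := \phi_1(x)\, y_0$. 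Either choice yields linearly independent positive rank-one operators $P_1, P_2$.

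Next, consider the $2$-dimensional subspace $V := \text{\rm span}\{P_1, P_2\} \subseteq \text{\rm L}_r(E,F)$ and the closed convex cone $C := V \cap \text{\rm L}_+(E,F)$. Antisymmetry of the order yields $C \cap (-C) = \{0\}$, so $C$ is a closed convex cone in $V \cong \mathbb{R}^2$ containing no line through the origin; consequently $C$ is either $\{0\}$, a ray, or a closed planar sector of opening angle strictly less than $\pi$. In every case $C \cup (-C)$ fails to cover $V$, and any $T_0$ chosen from $V \setminus (C \cup (-C))$ is a linear combination of the positive operators $P_1, P_2$---hence regular---while satisfying $T_0 \not\ge 0$ and $-T_0 \not\ge 0$, completing the construction.

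The main obstacle is really the first step: using $\dim\text{\rm L}(E,F) > 1$ together with the Banach-lattice structure of $E^*$ and $F$ to guarantee two linearly independent positive operators (one must be careful to handle the case $\dim F = 1$ separately, since then one varies the functional rather than the vector in the rank-one construction). Once $P_1, P_2$ are in hand, the planar-geometry argument and the verification of the required properties of ${\cal P}$ are routine.
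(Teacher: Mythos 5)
Your proof is correct and follows the same basic strategy as the paper: take $\mathcal{P}$ to be the span of a single regular operator $T_0$ with $T_0\not\ge 0$ and $-T_0\not\ge 0$, so that $\mathcal{P}_r(E,F)=\mathcal{P}$ while $\mathcal{P}\cap\mathrm{L}_+(E,F)=\{0\}$ forces $\text{r-}\mathcal{P}(E,F)=\{0\}$. The difference lies in how $T_0$ is produced. The paper simply picks two linearly independent $g_1,g_2\in E'_+$ (resp.\ $y_1,y_2\in F_+$) and spans $(g_1-g_2)\otimes y$ (resp.\ $g\otimes(y_1-y_2)$); as written this leaves a small gap, since linear independence of $g_1,g_2\ge 0$ does not by itself rule out $g_1-g_2\ge 0$ (e.g.\ $g_1=(2,1)$, $g_2=(1,1)$ on $\mathbb{R}^2$), in which case $\text{r-}\mathcal{P}(E,F)=\mathcal{P}$ and the conclusion fails for that choice. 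Your planar argument --- that $C=V\cap\mathrm{L}_+(E,F)$ is a norm-closed pointed convex cone in the two-dimensional space $V=\mathrm{span}\{P_1,P_2\}$, hence $C\cup(-C)\ne V$ --- supplies exactly the missing existence step and is the cleanest way to repair it (note that closedness of $C$, which you do assert, is genuinely needed here: a non-closed pointed convex cone in the plane can satisfy $C\cup(-C)=V$). Your case split on $\dim F\ge 2$ versus $\dim F=1$, $\dim E\ge 2$ matches the paper's, using that positive cones of $F$ and of $E'$ are generating. In short: same construction, but your version is the one that actually closes the argument.
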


\begin{proof}
If $\dim(E)>1$ then $\dim(E')>1$ and hence 
there are two linearly independent functionals
$g_1,g_2\in E'_+$. We take a nonzero
element $y\in F_+$ and set ${\cal P}:=\text{\rm span}((g_1-g_2)\otimes y)$.

If $\dim(F)>1$, there are two linearly independent elements
$y_1,y_2\in F_+$. We take a nonzero
functional  $g\in E'_+$ and set ${\cal P}:=\text{\rm span}(g\otimes (y_1-y_2))$.

In the both cases ${\cal P}$ is one-dimensional
subspace of $\text{\rm L}(E,F)$.
Moreover, $\text{\rm r-}{\cal P}(E,F)=\{0\}\ne {\cal P}={\cal P}_r(E,F)$.
\end{proof}

\subsection{}
The following result is an extension of \cite[Prop.1.3.6]{Mey}, 
\cite[Lm.1]{Emel} (see also \cite[Prop.2.2]{CW97} and 
\cite[Thm.2.3]{Cheng} for particular cases). Its proof is a modification 
of the proof of \cite[Prop.1.3.6]{Mey}.

\begin{theorem}\label{P-norm}
Let ${\cal P}$ be a subspace of $\text{\rm L}(E,F)$
closed in the operator norm.
Then $\text{\rm r-}{\cal P}(E,F)$ is a Banach space under the
enveloping norm.
\end{theorem}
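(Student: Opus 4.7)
The plan is to establish completeness via the standard equivalence: a normed space is complete iff every absolutely convergent series converges. So I start with a sequence $(T_n)\subset\text{\rm r-}{\cal P}(E,F)$ satisfying $\sum_n\|T_n\|_{\text{\rm r-}{\cal P}}<\infty$ and aim to produce a limit in $\text{\rm r-}{\cal P}(E,F)$.

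First I would use the definition (\ref{enveloping norm}) to pick, for each $n$, an operator $S_n\in{\cal P}$ with $\pm T_n\le S_n$ and $\|S_n\|\le\|T_n\|_{\text{\rm r-}{\cal P}}+2^{-n}$, so that $\sum_n\|S_n\|<\infty$. Since ${\cal P}$ is norm-closed in the Banach space $\text{\rm L}(E,F)$, it is itself a Banach space; hence $\sum_n S_n$ converges in operator norm to some $S\in{\cal P}$.

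The key construction is to decompose each $T_n$ as a difference of positive ${\cal P}$-operators using its dominator $S_n$. Since $T_n\in\text{\rm r-}{\cal P}(E,F)\subseteq{\cal P}$ and $S_n\in{\cal P}$, the operators $A_n:=\tfrac{1}{2}(S_n+T_n)$ and $B_n:=\tfrac{1}{2}(S_n-T_n)$ both lie in ${\cal P}\cap\text{\rm L}_+(E,F)$ and satisfy $T_n=A_n-B_n$. Moreover, since $0\le A_n\le S_n$, one has $|A_n x|\le A_n|x|\le S_n|x|$ for all $x\in E$, giving $\|A_n\|\le\|S_n\|$; the same bound holds for $B_n$. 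Thus $\sum_n A_n$ and $\sum_n B_n$ are absolutely convergent in operator norm to operators $A,B$ lying in ${\cal P}$ (by closedness) and in $\text{\rm L}_+(E,F)$ (since the positive cone is norm-closed). Setting $T:=A-B$ places $T$ in $\text{\rm r-}{\cal P}(E,F)$.

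It remains to upgrade operator-norm convergence of the partial sums to convergence in the enveloping norm. For this I would use the tail $R_N:=\sum_{n>N}S_n\in{\cal P}$. For any $M>N$, the partial sums satisfy $\pm\sum_{n=N+1}^M T_n\le\sum_{n=N+1}^M S_n$; letting $M\to\infty$ in operator norm and invoking norm-closedness of $\text{\rm L}_+(E,F)$ yields
$$
\pm\Bigl(T-\sum_{n=1}^N T_n\Bigr)\le R_N,\qquad\text{so}\qquad \Bigl\|T-\sum_{n=1}^N T_n\Bigr\|_{\text{\rm r-}{\cal P}}\le\|R_N\|\le\sum_{n>N}\|S_n\|\longrightarrow 0.
$$
The main obstacle I anticipate lies in the third step: passing dominated partial sums to a limit would only show that $T$ is ${\cal P}$-dominated, whereas membership in $\text{\rm r-}{\cal P}(E,F)$ demands an actual decomposition into two positive elements of ${\cal P}$. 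Splitting each $T_n$ against its own dominator $S_n$ and exploiting that absolute convergence in the closed subspace ${\cal P}$ produces clean limits in ${\cal P}\cap\text{\rm L}_+(E,F)$ is precisely what circumvents this difficulty.
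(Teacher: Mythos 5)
Your proof is correct, and its skeleton coincides with the paper's: both arguments reduce completeness to a series criterion (you use the absolutely convergent series characterization; the paper passes to a subsequence with $\|T_{n+1}-T_n\|_{\text{\rm r-}{\cal P}}<2^{-n}$, which is the same device), both choose dominators $S_n\in{\cal P}$ with summable norms, both use norm-closedness of ${\cal P}$ to place the tails $R_N$ (the paper's $Q_n$) in ${\cal P}$, and both finish with the estimate $\|T-\sum_{n\le N}T_n\|_{\text{\rm r-}{\cal P}}\le\|R_N\|$. Where you genuinely diverge is in certifying that the limit lies in $\text{\rm r-}{\cal P}(E,F)$. The paper keeps the given decompositions $T_n=P_n-R_n$ with $P_n,R_n\in{\cal P}\cap\text{\rm L}_+(E,F)$ and writes $T=[(T-T_n)+Q_n+P_n]-[R_n+Q_n]$, using $\pm(T-T_n)\le Q_n$ to see that the first bracket is positive. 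You instead manufacture a fresh decomposition $T_n=A_n-B_n$ with $A_n=\tfrac12(S_n+T_n)$ and $B_n=\tfrac12(S_n-T_n)$ in ${\cal P}\cap\text{\rm L}_+(E,F)$, extract the norm bound $\|A_n\|,\|B_n\|\le\|S_n\|$ from $0\le A_n,B_n\le S_n$, and sum two absolutely convergent series in the norm-closed set ${\cal P}\cap\text{\rm L}_+(E,F)$. Your route exhibits the positive parts of the limit as explicit norm-convergent series and makes no reference to the original decompositions of the $T_n$, at the price of the extra (easy) observations that $0\le A\le S$ forces $\|A\|\le\|S\|$ and that the positive cone of $\text{\rm L}(E,F)$ is norm-closed; the paper instead works pointwise via formula (\ref{Enveloping P-norm 2}). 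Both are complete proofs; the differences are in the decomposition step rather than in the overall structure.
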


\begin{proof}
Let $(T_n)$ be a
$\|\cdot\|_{\text{\rm r-}{\cal P}}$-Cauchy sequence in $\text{\rm r-}{\cal P}(E,F)$,
say $T_n=P_n-R_n$ for $P_n,R_n\in{\cal P}\cap{\cal L}_+(E,F)$.
Without lost of generality, we can assume 
$$
   \|T_{n+1}-T_n\|_{\text{\rm r-}{\cal P}}<2^{-n} \ \ \ (\forall n\in\mathbb{N}). 
$$
Since $\|\cdot\|_{\text{\rm r-}{\cal P}}\ge\|\cdot\|$, there exists a
$T\in\text{\rm L}(E,F)$ with $\|T-T_n\|\to 0$.
We obtain $T\in{\cal P}$ because $T_n\in{\cal P}$ and ${\cal P}$ is closed 
in the operator norm. Pick $S_n\in{\cal P}$ with $\|S_n\|<2^{-n}$ and
$\pm(T_{n+1}-T_n)\le S_n$. By (\ref{Enveloping P-norm 2}), 
$$
   |(T_{n+1}-T_n)x|\le S_n|x| \ \ \ \ \ (\forall x\in E)(\forall n\in\mathbb{N}). 
$$
As ${\cal P}$ is closed,
$Q_n:=\|\cdot\|\text{\rm -}\sum\limits_{k=n}^\infty S_k\in{\cal P}$ 
for each $n\in\mathbb{N}$.
Since
$$
   |(T-T_n)x|=\lim\limits_{k\to\infty}|(T_k-T_n)x|\le
   \sum\limits_{k=n}^\infty|(T_{k+1}-T_n)x|\le Q_n|x| \ \ \ \ \ (\forall x\in E), 
$$
then $\pm(T-T_n)\le Q_n$ for all $n\in\mathbb{N}$. 
Thus $-Q_n\le(T-T_n)\le Q_n$ and hence $0\le(T-T_n)+Q_n$
for all $n\in\mathbb{N}$. In particular, 
$$
   T=[(T-T_n)+Q_n]+[T_n-Q_n]=
   [(T-T_n)+Q_n+P_n]-[R_n+Q_n]\in\text{\rm r-}{\cal P}(E,F),
$$
and hence $(T-T_n)\in\text{\rm r-}{\cal P}(E,F)$ for all $n\in\mathbb{N}$.
Now, 
$$
   \|T-T_n\|_{\text{\rm r-}{\cal P}}\le\|Q_n\|<2^{1-n}
$$
implies $(T_n)\stackrel{\|\cdot\|_{\text{\rm r-}{\cal P}}}{\to}T$.
\end{proof}

\begin{theorem}\label{reg op banach lattice}
Let $F$ be Dedekind complete and let ${\cal P}$ 
be a closed in the operator norm subspace of $\text{\rm L}(E,F)$
satisfying the domination property. Then $\text{\rm r-}{\cal P}(E,F)$
is a Dedekind complete Banach lattice under the enveloping norm.
\end{theorem}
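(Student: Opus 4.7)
The plan is to combine the two preceding results with one new identity. By Assertion~\ref{vect lat}, $\text{r-}{\cal P}(E,F)$ is already a Dedekind complete vector lattice under the order inherited from $\text{L}_r(E,F)$, and by Theorem~\ref{P-norm} it is a Banach space under $\|\cdot\|_{\text{r-}{\cal P}}$. The only thing left is to show that $\|\cdot\|_{\text{r-}{\cal P}}$ is a lattice norm, and I would obtain this from the sharper identity
\[
    \|T\|_{\text{r-}{\cal P}}=\|\,|T|\,\| \qquad (T\in\text{r-}{\cal P}(E,F)),
\]
which plays for the enveloping norm the role that $\|T\|_r=\|\,|T|\,\|$ plays for the regular norm in \cite[Prop.1.3.6]{Mey}.

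First I would note that for $T\in\text{r-}{\cal P}(E,F)$ the modulus $|T|$ exists in $\text{L}_r(E,F)$ because $F$ is Dedekind complete, and Assertion~\ref{prop elem}(ii), applicable via the assumed domination property, forces $|T|\in{\cal P}$. Hence $|T|$ also serves as the modulus of $T$ inside the sublattice $\text{r-}{\cal P}(E,F)$.

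The inequality $\|T\|_{\text{r-}{\cal P}}\le\|\,|T|\,\|$ is then immediate, since $\pm T\le |T|\in{\cal P}$ makes $|T|$ an admissible competitor in the infimum (\ref{enveloping norm}). For the reverse direction, any $S\in{\cal P}$ with $\pm T\le S$ must satisfy $|T|\le S$, because $|T|$ is the least upper bound of $\{T,-T\}$ in $\text{L}_r(E,F)$. Using the standard fact that the operator norm is monotone on positive operators into a Banach lattice (if $0\le U\le V$ in $\text{L}_+(E,F)$ then $\|U\|\le\|V\|$, via $\|Ux\|\le\|Vx\|$ for $x\in E_+$ with $\|x\|\le 1$ and the identity $\|U\|=\sup_{x\in E_+,\,\|x\|\le 1}\|Ux\|$), we obtain $\|\,|T|\,\|\le\|S\|$; taking the infimum yields $\|\,|T|\,\|\le\|T\|_{\text{r-}{\cal P}}$.

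With the identity secured, the lattice norm property is immediate: if $|T_1|\le|T_2|$ in $\text{r-}{\cal P}(E,F)$, then $\|T_1\|_{\text{r-}{\cal P}}=\|\,|T_1|\,\|\le\|\,|T_2|\,\|=\|T_2\|_{\text{r-}{\cal P}}$ by the same monotonicity. Combined with the Banach space structure from Theorem~\ref{P-norm} and the Dedekind complete vector lattice structure from Assertion~\ref{vect lat}, this completes the proof. The only delicate point is the inclusion $|T|\in{\cal P}$, which is precisely where the domination property is used; once it is in hand, everything else mirrors the classical computation for $\|\cdot\|_r$.
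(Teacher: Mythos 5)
Your proposal is correct and follows essentially the same route as the paper: Assertion~\ref{vect lat} for the Dedekind complete vector lattice structure, Theorem~\ref{P-norm} for norm completeness, and then the identity $\|T\|_{\text{\rm r-}{\cal P}}=\|\,|T|\,\|$ (obtained from $|T|\in{\cal P}$ via Assertion~\ref{prop elem}(ii) and monotonicity of the operator norm on positive operators) to verify the lattice norm property. The paper simply asserts this identity inline in its display, whereas you spell out its proof; the content is the same.
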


\begin{proof}
By Assertion~\ref{vect lat}, $\text{\rm r-}{\cal P}(E,F)$ is a Dedekind 
complete vector lattice. Theorem~\ref{P-norm} implies that 
$\text{\rm r-}{\cal P}(E,F)$ is $\|\cdot\|_{\text{\rm r-}{\cal P}}$-complete.
Let $|S|\le|T|$ for some $S,T\in\text{\rm r-}{\cal P}(E,F)$. 
By Assertion \ref{prop elem}~(ii), $|S|,|T|\in{\cal P}$, and hence
$$
   \|S\|_{\text{\rm r-}{\cal P}}=\|~|S|~\|=
   \sup\limits_{x\in E_+\cap B_E}\||S|x\|\le
   \sup\limits_{x\in E_+\cap B_E}\||T|x\|=\|~|T|~\|=\|T\|_{\text{\rm r-}{\cal P}}.
$$
Therefore,  $(\text{\rm r-}{\cal P}(E,F),\|\cdot\|_{\text{\rm r-}{\cal P}})$ 
is a Banach lattice.
\end{proof}

\subsection{}
In general, $\text{\rm r-}{\cal P}(E,F)\subsetneqq\text{\rm r-}{\overline{\cal P}}(E,F)$,
where ${\overline{\cal P}}$ is the norm-closure of ${\cal P}$ in $\text{\rm L}(E,F)$.
From the other hand, for ${\cal P}:=\text{\rm r-L}_{ob}(E,F)$ (which  
is almost never closed in $\text{\rm L}(E,F)$ in the operator norm),
$\text{\rm r-}{\cal P}(E,F)=\text{\rm r-}{\overline{\cal P}}(E,F)=\text{\rm L}_r(E,F)$.
The enveloping norms on these three spaces agree with the regular norm.
The next result coupled with Example~\ref{P=o-cont} shows that the enveloping norm 
on $\text{\rm r-}{\cal P}(E,F)$ can be 
complete even if ${\cal P}(E,F)$ is not complete in the operator norm.

\begin{proposition}\label{o-cont not so bad}
Let the norm in $F$ be \text{\rm o}-continuous. Then
$\text{\rm r-L}_{oc}(E,F)$ is a Banach space 
under the enveloping norm.
\end{proposition}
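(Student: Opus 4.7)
The plan is to adapt the proof of Theorem~\ref{P-norm} to ${\cal P}:=\text{\rm L}_{oc}(E,F)$, which is typically not closed in $\text{\rm L}(E,F)$ under the operator norm. Two consequences of the o-continuity of the norm of $F$ will drive the argument: first, $F$ is Dedekind complete, so $\text{\rm L}_{ob}(E,F)$ is a vector lattice in which $\text{\rm L}_{oc}(E,F)$ is a band, and in particular ${\cal P}$ satisfies the domination property; second, for a decreasing order-bounded net in $F_+$, order convergence to $0$ coincides with norm convergence to $0$.

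Given a $\|\cdot\|_{\text{\rm r-}{\cal P}}$-Cauchy sequence $(T_n)$ with $\|T_{n+1}-T_n\|_{\text{\rm r-}{\cal P}}<2^{-n}$, I would first pick $S_n\in\text{\rm L}_{oc}(E,F)_+$ with $\pm(T_{n+1}-T_n)\le S_n$ and $\|S_n\|<2^{-n}$, then take $T\in\text{\rm L}(E,F)$ with $\|T-T_n\|\to 0$ (possible since $\|\cdot\|_{\text{\rm r-}{\cal P}}\ge\|\cdot\|$), and set $Q_n:=\sum_{k=n}^{\infty}S_k$, which converges in operator norm to a positive operator in $\text{\rm L}(E,F)$. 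The estimate $\pm(T-T_n)\le Q_n$ for every $n$ then follows from (\ref{Enveloping P-norm 2}) and the usual telescoping bound exactly as in Theorem~\ref{P-norm}.

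The crux will be to show $Q_n\in\text{\rm L}_{oc}(E,F)$---the only place the original proof invoked operator-norm closedness of ${\cal P}$. Fix $x_\alpha\downarrow 0$ in $E_+$ with $x_\alpha\le x_{\alpha_0}$, and for each $N$ write $Q_n=Q_n^{(N)}+R_n^{(N)}$ with $Q_n^{(N)}:=\sum_{k=n}^{N}S_k\in\text{\rm L}_{oc}(E,F)$ and $\|R_n^{(N)}\|\le 2^{-N}$. Since $Q_n^{(N)}$ is a finite sum of o-continuous operators and the norm of $F$ is o-continuous, $\|Q_n^{(N)}x_\alpha\|\to 0$; combined with $\|R_n^{(N)}x_\alpha\|\le\|R_n^{(N)}\|\cdot\|x_{\alpha_0}\|$, a standard $\varepsilon/2$-argument gives $\|Q_n x_\alpha\|\to 0$. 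Since $Q_n\ge 0$ and $F$ is Dedekind complete, $Q_n x_\alpha$ has an infimum $y\ge 0$ with $\|y\|\le\|Q_n x_\alpha\|\to 0$, forcing $y=0$; hence $Q_n\in\text{\rm L}_{oc}(E,F)$.

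Once $Q_n\in\text{\rm L}_{oc}(E,F)$, the rest is routine: $T-T_n$ is dominated by $Q_n$, hence lies in $\text{\rm L}_{ob}(E,F)$, and its modulus $|T-T_n|\le Q_n$ lies in $\text{\rm L}_{oc}(E,F)$ by the domination property; Assertion~\ref{prop elem}(ii) then delivers $T-T_n\in\text{\rm r-L}_{oc}(E,F)$, so $T=T_n+(T-T_n)\in\text{\rm r-L}_{oc}(E,F)$ with $\|T-T_n\|_{\text{\rm r-}{\cal P}}\le\|Q_n\|\to 0$. The main obstacle is thus the o-continuity of $Q_n$, and overcoming it is precisely what forces the hypothesis on the norm of $F$.
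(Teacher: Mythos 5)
Your proposal is correct and follows essentially the same route as the paper: the same telescoping construction of $Q_n=\sum_{k=n}^{\infty}S_k$, and the same key step of proving $Q_n\in\text{\rm L}_{oc}(E,F)$ by splitting off a finite sum of \text{\rm o}-continuous operators plus a norm-small tail, using the \text{\rm o}-continuity of the norm of $F$ to pass from $\|Q_nx_\alpha\|\to 0$ to $Q_nx_\alpha\downarrow 0$. Your concluding step (via the domination property of $\text{\rm L}_{oc}(E,F)$ and Assertion~\ref{prop elem}(ii)) is in fact slightly more explicit than the paper's about why $T-T_n\in\text{\rm r-L}_{oc}(E,F)$, but it is the same argument in substance.
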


\begin{proof}
Let $(T_n)$ be a Cauchy sequence in $\text{\rm r-L}_{oc}(E,F)$ in 
the enveloping norm. We can assume that 
$\|T_{n+1}-T_n\|_{\text{\rm r-L}_{oc}(E,F)}<2^{-n}$ for all
$n\in\mathbb{N}$. Let $T\in\text{\rm L}(E,F)$ satisfy
$\|T-T_n\|\to 0$. Pick $S_n\in\text{\rm L}_{oc}(E,F)$ with $\|S_n\|<2^{-n}$ and
$\pm(T_{n+1}-T_n)\le S_n$. First, we claim
$Q_n:=\|\cdot\|\text{\rm -}\sum\limits_{k=n}^\infty S_k\in\text{\rm L}_{oc}(E,F)$ 
for all $n\in\mathbb{N}$. To prove the claim, it suffices to show that
$Q_1\in\text{\rm L}_{oc}(E,F)$. So, let $x_\alpha\downarrow 0$ in $E$.
Passing to a tail we can assume that $\|x_\alpha\|\le M\in\mathbb{R}$
for all $\alpha$. Since $Q_1\ge 0$ then $Q_1x_\alpha\downarrow\ge 0$ and hence
in order to show that $Q_1x_\alpha\downarrow 0$ it is enough to
prove that $\|Q_1x_\alpha\|\to 0$.
Let $\varepsilon>0$. Fix an $m\in\mathbb{N}$ with
$\|Q_{m+1}\|\le\varepsilon$. Since the positive 
operators $S_1,...,S_m$ are all \text{\rm o}-continuous, 
and since the norm in $F$ is \text{\rm o}-continuous,
there exists an $\alpha_1$ such that 
$\sum\limits_{k=1}^m\|S_kx_\alpha\|\le\varepsilon$
for all $\alpha\ge\alpha_1$.  Since $\varepsilon>0$ is arbitrary,
it follows from 
$$
   \|Q_1x_\alpha\|\le\|\sum\limits_{k=1}^mS_kx_\alpha\|+
   \|Q_{m+1}x_\alpha\|\le\varepsilon+M\|Q_{m+1}\|\le 
   2\varepsilon \ \ \ \ (\forall\alpha\ge\alpha_1)
$$
that $\|Q_1x_\alpha\|\to 0$, which proves our claim that 
$Q_n\in\text{\rm L}_{oc}(E,F)$
for all $n\in\mathbb{N}$.

Since $\pm(T_{n+1}-T_n)\le S_n$ then
by formula (\ref{Enveloping P-norm 2}),
$$
   |(T-T_n)x|=\lim\limits_{k\to\infty}|(T_k-T_n)x|\le
   \sum\limits_{k=n}^\infty |(T_{k+1}-T_n)x|\le
   \sum\limits_{k=n}^\infty S_n|x|=Q_n|x|  
$$
for all $x\in E$. In particular, $|T-T_1|\le Q_1\in\text{\rm L}_{oc}(E,F)$ and,
since $\text{\rm L}_{oc}(E,F)$ is an order ideal in $\text{\rm L}_{r}(E,F)$, then
$(T-T_1)\in\text{\rm L}_{oc}(E,F)$. 
Since $T_1\in\text{\rm L}_{oc}(E,F)$, it follows
$T\in\text{\rm L}_{oc}(E,F)$.
Now, $\|T-T_n\|_{\text{\rm r-L}_{oc}(E,F)}\le\|Q_n\|<2^{1-n}$ implies
$(T_n)\stackrel{\|\cdot\|_{\text{\rm r-L}_{oc}(E,F)}}{\longrightarrow}T$.
\end{proof}

\begin{example}\label{P=o-cont}
Consider the following modification of Krengel's example {\rm \cite{Kre} (}cf. 
{\rm \cite[Ex.5.6]{AlBu}}$)$.
Define a sequence $(S_n)$ in $\text{\rm L}_{oc}((\oplus_{n=1}^{\infty}\ell^2_{2^n})_0)$ by
$$
   S_nx:=(2^{-\frac{1}{3}}T_1x_1,2^{-\frac{2}{3}}T_2x_2,\dots 2^{-\frac{n}{3}}T_nx_n,0,0,\dots),
$$
where $T_n:\ell^2_{2^n}\to\ell^2_{2^n}$ is an isometry as in {\rm \cite[Ex.5.6]{AlBu}}.
Then $\|S_n-S\|\to 0$ for an operator $S\in\text{\rm L}(E,F)$, defined by
$$
   Sx:=(2^{-\frac{1}{3}}T_1x_1,2^{-\frac{2}{3}}T_2x_2,\dots2^{-\frac{n}{3}}T_nx_n,\dots).
$$
Notice that $(\oplus_{n=1}^{\infty}\ell^2_{2^n})_0$ has \text{\rm o}-continuous norm,
and hence is Dedekind complete.
However $|S|$ does not exist. Thus, $S$ is not order bounded and hence
$S\not\in\text{\rm L}_{oc}((\oplus_{n=1}^{\infty}\ell^2_{2^n})_0)$.
\end{example}

\subsection{}
We include several applications of Theorem \ref{P-norm} and Assertion \ref{prop elem}.
We begin with recalling the following definition (see \cite{BD,El}).

\begin{definition}\label{Main def of limited operators} 
{\em A continuous operator $T:E\to F$ is called
{\em limited} (resp. {\em almost limited}) if $T'$ takes $\text{\rm w}^\ast$-null 
(resp. disjoint $\text{\rm w}^\ast$-null) sequences of $F'$ to norm null sequences of $E'$.
By $\text{\rm Lm}(E,F)$ (by $\text{\rm a-Lm}(E,F)$) we denote the space 
of limited (resp. almost limited) operators from $E$ to $F$.}
\end{definition}

\begin{lemma}\label{alimited are closed}
The spaces $\text{\rm a-Lm}(E,F)$ and $\text{\rm Lm}(E,F)$
are both closed in $\text{\rm L}(E,F)$ under the operator norm.
\end{lemma}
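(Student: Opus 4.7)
The plan is to run the standard ``uniform limit of an operator-ideal-like class'' argument: take a sequence $(T_n)$ in $\text{\rm Lm}(E,F)$ (resp. $\text{\rm a-Lm}(E,F)$) with $\|T_n-T\|\to 0$ for some $T\in\text{\rm L}(E,F)$, and show that $T$ sends the relevant class of $\text{\rm w}^\ast$-null sequences in $F'$ to norm-null sequences in $E'$.

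First I would treat $\text{\rm Lm}(E,F)$. Let $(f_k)\subseteq F'$ be a $\text{\rm w}^\ast$-null sequence. Since $F'$ is a Banach space and $(f_k)$ is pointwise bounded on $F$, the Banach--Steinhaus theorem gives $M:=\sup_k\|f_k\|<\infty$. For a given $\varepsilon>0$, choose $n$ so large that $\|T-T_n\|<\varepsilon/(2M+1)$. Then
\begin{equation*}
   \|T'f_k\|\le\|(T-T_n)'f_k\|+\|T'_nf_k\|\le M\|T-T_n\|+\|T'_nf_k\|.
\end{equation*}
Because $T_n\in\text{\rm Lm}(E,F)$, we have $\|T'_nf_k\|\to 0$ as $k\to\infty$, and hence $\limsup_k\|T'f_k\|\le\varepsilon/2$. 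Since $\varepsilon>0$ was arbitrary, $\|T'f_k\|\to 0$, so $T\in\text{\rm Lm}(E,F)$.

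For $\text{\rm a-Lm}(E,F)$ the argument is literally the same, only now $(f_k)$ is required to be a \emph{disjoint} $\text{\rm w}^\ast$-null sequence; this restriction plays no role in the estimate above, since the property ``$(f_k)$ is disjoint and $\text{\rm w}^\ast$-null'' is a property of the sequence alone and is preserved under any choice of the approximating operator $T_n$. The same Banach--Steinhaus bound and the same triangle inequality give $\|T'f_k\|\to 0$, so $T\in\text{\rm a-Lm}(E,F)$.

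There is no real obstacle here; the only input beyond linear algebra is the uniform boundedness of $\text{\rm w}^\ast$-null sequences in $F'$, which is automatic since $F'$ is complete. The closedness of both spaces in the operator norm follows.
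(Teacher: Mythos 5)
Your proof is correct and follows essentially the same route as the paper's: bound the $\text{\rm w}^\ast$-null sequence via uniform boundedness, then split $T'f_k$ through an approximating $T_n$ and use the triangle inequality. The observation that disjointness of $(f_k)$ plays no role in the estimate is exactly how the paper handles the almost limited case as well.
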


\begin{proof}
Suppose $\text{\rm a-Lm}(E,F)\ni T_k\stackrel{\|\cdot\|}{\to}T\in\text{\rm L}(E,F)$.
Let $(f_n)$ be disjoint $\text{\rm w}^\ast$-null in $F'$.
We need to show that $(T'f_n)$ is norm null $X'$. Let $\varepsilon>0$.
Choose any $k$ with $\|T'_k-T'\|\le\varepsilon$. Since
$T_k\in\text{\rm a-Lm}(X,F)$, there exists $n_0$ such that
$\|T_k'f_n\|\le\varepsilon$ whenever $n\ge n_0$.
As $(f_n)$ is $\text{\rm w}^\ast$-null, there exists $M\in\mathbb{R}$
such that $\|f_n\|\le M$ for all $n\in\mathbb{N}$.
Then 
$$
   \|T'f_n\|\le\|T'_kf_n\|+\|T'_kf_n-T'f_n\|\le
   \varepsilon+\|T'_k-T'\|\|f_n\|\le\varepsilon+M\varepsilon
$$
for $n\ge n_0$. Since $\varepsilon>0$ is arbitrary
then $(T'f_n)$ is norm null, as desired.

Similarly, the space $\text{\rm Lm}(E,F)$ is closed.
\end{proof}
\noindent
The next result follows from Theorem \ref{P-norm} 
and Lemma \ref{alimited are closed}.

\begin{corollary}\label{P-norm in r-a-Lm}
For arbitrary Banach lattices $E$ and $F$, 
$\text{\rm r-Lm}(E,F)$ and $\text{\rm r-a-Lm}(E,F)$ 
are both Banach spaces {\rm (}each under its enveloping norm{\rm )}.
\end{corollary}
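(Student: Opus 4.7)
The statement is a direct corollary of the two results cited in the preceding sentence, so the plan is essentially a verification that the hypotheses of Theorem~\ref{P-norm} are met for ${\cal P}=\text{\rm Lm}(E,F)$ and for ${\cal P}=\text{\rm a-Lm}(E,F)$.

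First I would observe that both $\text{\rm Lm}(E,F)$ and $\text{\rm a-Lm}(E,F)$ are linear subspaces of $\text{\rm L}(E,F)$: if $T_1,T_2$ belong to one of these classes and $\alpha,\beta\in\mathbb{R}$, then $(\alpha T_1+\beta T_2)'=\alpha T_1'+\beta T_2'$ sends (disjoint) $\text{\rm w}^\ast$-null sequences in $F'$ to linear combinations of norm-null sequences in $E'$, hence to norm-null sequences. This is routine and I would dispatch it in one sentence.

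Next I would invoke Lemma~\ref{alimited are closed} to conclude that both subspaces are closed in $\text{\rm L}(E,F)$ under the operator norm. With the hypotheses of Theorem~\ref{P-norm} now verified in each case, I would apply that theorem with ${\cal P}=\text{\rm Lm}(E,F)$ to obtain that $\text{\rm r-Lm}(E,F)$ is a Banach space under $\|\cdot\|_{\text{\rm r-Lm}}$, and then again with ${\cal P}=\text{\rm a-Lm}(E,F)$ to obtain the analogous conclusion for $\text{\rm r-a-Lm}(E,F)$. There is no real obstacle here; the content of the corollary lies entirely in Lemma~\ref{alimited are closed} and Theorem~\ref{P-norm}, and the proof is one sentence of citation.
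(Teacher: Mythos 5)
Your proposal is correct and matches the paper exactly: the paper gives no separate proof, stating only that the corollary follows from Theorem~\ref{P-norm} and Lemma~\ref{alimited are closed}. Your added remark that $\text{\rm Lm}(E,F)$ and $\text{\rm a-Lm}(E,F)$ are linear subspaces is a harmless (and sensible) explicit check of a hypothesis the paper leaves implicit.
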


\begin{definition}\label{Main Grothendieck operators} {\rm (see \cite{GM})}
{\em A continuous operator $T:E\to F$ is called 
{\em Grothendieck} (resp. {\em almost Grothendieck})
if $T'$ takes $\text{\rm w}^\ast$-null 
(resp. disjoint $\text{\rm w}^\ast$-null)
sequences of $F'$ to \text{\rm w}-null sequences of $E'$.
By $\text{\rm G}(E,F)$ (by $\text{\rm a-G}(E,F)$) we denote the space 
of Grothendieck (resp. almost Grothendieck) operators from $E$ to $F$.
}
\end{definition}
\noindent
By Definitions \ref{Main def of limited operators} and \ref{Main Grothendieck operators},  
\begin{equation}\label{limgr1}
   \text{\rm K}(E,F)\subseteq\text{\rm Lm}(E,F)\subseteq
   \text{\rm a-Lm}(E,F)\subseteq\text{\rm a-G}(E,F),  
\end{equation}
and
\begin{equation}\label{limgr2}
    \text{\rm Lm}(E,F)\subseteq\text{\rm G}(E,F)\subseteq\text{\rm a-G}(E,F). 
\end{equation}

\begin{lemma}\label{a-G are closed}
The spaces $\text{\rm a-G}(E,F)$ and $\text{\rm G}(E,F)$
are both closed in $\text{\rm L}(E,F)$ under the operator norm.
\end{lemma}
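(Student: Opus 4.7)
The plan is to adapt the argument of Lemma~\ref{alimited are closed} by testing weak convergence against an arbitrary functional in $E''$. Suppose $\text{\rm a-G}(E,F)\ni T_k\stackrel{\|\cdot\|}{\to}T\in\text{\rm L}(E,F)$, and let $(f_n)$ be a disjoint $\text{\rm w}^\ast$-null sequence in $F'$. We must show that $(T'f_n)$ is weakly null in $E'$. Fix an arbitrary $\phi\in E''$ and $\varepsilon>0$; it suffices to produce $n_0$ such that $|\phi(T'f_n)|\le\varepsilon$ for all $n\ge n_0$.

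Since $(f_n)$ is $\text{\rm w}^\ast$-null, it is norm-bounded, say $\|f_n\|\le M$ for all $n$. Choose $k$ with $\|T-T_k\|\cdot\|\phi\|\cdot M\le\varepsilon/2$. Because $T_k\in\text{\rm a-G}(E,F)$, the sequence $(T_k'f_n)$ is weakly null in $E'$, so there exists $n_0$ with $|\phi(T_k'f_n)|\le\varepsilon/2$ for all $n\ge n_0$. Then for such $n$,
$$
    |\phi(T'f_n)|\le|\phi(T_k'f_n)|+|\phi((T'-T_k')f_n)|
    \le\varepsilon/2+\|\phi\|\cdot\|T-T_k\|\cdot\|f_n\|\le\varepsilon,
$$
which proves $(T'f_n)$ is weakly null. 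This establishes that $\text{\rm a-G}(E,F)$ is closed in $\text{\rm L}(E,F)$ in the operator norm.

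The case of $\text{\rm G}(E,F)$ is handled by the identical argument, with the word \emph{disjoint} omitted from the hypothesis on $(f_n)$; no other change is required, since the estimate does not use disjointness.

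The only subtlety compared with Lemma~\ref{alimited are closed} is that convergence in the target is weak rather than norm, so the splitting $T'f_n=(T'-T_k')f_n+T_k'f_n$ has to be tested against an arbitrary $\phi\in E''$; once this is done, the scalar estimate above is routine. Thus I do not anticipate a real obstacle, only the need to keep the two topologies (norm in $E'$ for the first summand, weak in $E'$ for the second) cleanly separated.
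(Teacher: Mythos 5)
Your proof is correct and follows essentially the same argument as the paper: approximate $T$ by some $T_k$ in operator norm, test $T'f_n$ against an arbitrary functional on $E'$, and split via the triangle inequality using the norm bound on $(f_n)$. (You even correctly test against $\phi\in E''$, where the paper's proof has a typo writing $g\in F''$.)
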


\begin{proof}
Suppose $\text{\rm a-G}(E,F)\ni T_k\stackrel{\|\cdot\|}{\to}T\in\text{\rm L}(E,F)$.
Let $(f_n)$ be disjoint $\text{\rm w}^\ast$-null in $F'$.
We need to show that $(T'f_n)$ is \text{\rm w}-null in $E'$. 
So, take any $g\in F''$. Let $\varepsilon>0$.
Pick a $k$ with $\|T'_k-T'\|\le\varepsilon$. Since
$T_k\in\text{\rm a-G}(E,F)$, there exists $n_0$ such that
$|g(T_k'f_n)|\le\varepsilon$ whenever $n\ge n_0$.
Note that $\|f_n\|\le M$ for some $M\in\mathbb{R}$
and for all $n\in\mathbb{N}$. Since $\varepsilon>0$
is arbitrary, it follows from
$$
   |g(T'f_n)|\le|g(T_k'f_n-T'f_n)|+|g(T'f_n)|\le
   \|g\|\|T_k'-T'\|\|f_n\|+\varepsilon\le(\|g\|M+1)\varepsilon
$$
for $n\ge n_0$, that $g(T'f_n)\to 0$. Since $g\in F''$ is arbitrary,
$T\in\text{\rm a-G}(E,F)$.

Similarly, the space $\text{\rm G}(E,F)$ is closed.
\end{proof}
\noindent
The next result follows from Theorem \ref{P-norm} and Lemma \ref{a-G are closed}.

\begin{corollary}\label{P-norm in a-G}
For arbitrary Banach lattices $E$ and $F$, 
$\text{\rm r-G}(E,F)$ and $\text{\rm r-a-G}(E,F)$ 
are both Banach spaces {\rm (}each under its enveloping norm{\rm )}.
\end{corollary}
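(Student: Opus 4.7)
The proof is essentially a direct invocation of the two results cited just before the statement, so the plan is short. First I would observe that $\text{\rm G}(E,F)$ and $\text{\rm a-G}(E,F)$ are indeed vector subspaces of $\text{\rm L}(E,F)$: if $T_1,T_2$ send (disjoint) $\text{\rm w}^\ast$-null sequences of $F'$ under transposition to $\text{\rm w}$-null sequences of $E'$, then so does $\lambda T_1 + \mu T_2$, because $(\lambda T_1 + \mu T_2)' = \lambda T_1' + \mu T_2'$ and $\text{\rm w}$-convergence is preserved under linear combinations. So we may legitimately plug $\mathcal{P}:=\text{\rm G}(E,F)$ and $\mathcal{P}:=\text{\rm a-G}(E,F)$ into the framework of Theorem~\ref{P-norm}.

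Next, Lemma~\ref{a-G are closed} tells us exactly what is needed to apply that theorem: both $\text{\rm G}(E,F)$ and $\text{\rm a-G}(E,F)$ are closed in $\text{\rm L}(E,F)$ in the operator norm. Thus Theorem~\ref{P-norm}, applied with $\mathcal{P}=\text{\rm G}(E,F)$, yields that $\text{\rm r-G}(E,F)$ is a Banach space under the enveloping norm $\|\cdot\|_{\text{\rm r-G}}$, and applied with $\mathcal{P}=\text{\rm a-G}(E,F)$, yields that $\text{\rm r-a-G}(E,F)$ is a Banach space under $\|\cdot\|_{\text{\rm r-a-G}}$.

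There is no real obstacle here; the only point one might pause on is whether the two enveloping norms in the statement are the ones built from $\mathcal{P}=\text{\rm G}(E,F)$ and $\mathcal{P}=\text{\rm a-G}(E,F)$ respectively, which matches the notation convention of Definition~\ref{rP-operators}. Writing the proof as a single sentence citing Theorem~\ref{P-norm} and Lemma~\ref{a-G are closed} for each of the two spaces is therefore entirely adequate, paralleling the proof of Corollary~\ref{P-norm in r-a-Lm}.
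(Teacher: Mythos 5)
Your proposal is correct and matches the paper exactly: the authors state that the corollary follows from Theorem~\ref{P-norm} together with Lemma~\ref{a-G are closed}, which is precisely your argument (your added check that $\text{\rm G}(E,F)$ and $\text{\rm a-G}(E,F)$ are subspaces is a harmless and reasonable extra detail).
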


We continue with the following definition.

\begin{definition}\label{def of DP operators} 
{\em A continuous operator $T:E\to F$ is called:
\begin{enumerate}[a)]
\item {\em Dunford--Pettis} (shortly, $T\in\text{\rm DP}(E,F)$) 
if $T$ takes \text{\rm w}-null sequences to 
norm null ones (cf. \cite[p.340]{AlBu}).
\item {\em weak Dun\-ford--Pet\-tis} (shortly, $T\in\text{\rm wDP}(E,F)$)
if $f_n(Tx_n)\to 0$ whenever $(f_n)$ is \text{\rm w}-null in $F'$ and 
$(x_n)$ is \text{\rm w}-null in $E$ \cite[p.349]{AlBu}.
\item 
{\em almost Dunford--Pettis} (shortly, $T\in\text{\rm a-DP}(E,F)$) if $T$ 
takes disjoint \text{\rm w}-null sequences to norm null ones \cite{San};
\item 
{\em almost weak Dun\-ford--Pet\-tis} (shortly, $T\in\text{\rm a-wDP}(E,F)$)
if $f_n(Tx_n) \to 0$ whenever $(f_n)$ is \text{\rm w}-null in $F'$ and 
$(x_n)$ is disjoint \text{\rm w}-null in $E$. 
\end{enumerate}}
\end{definition}
\noindent
Clearly, 
\begin{equation}\label{dpwdp1}
  \text{\rm K}(E,F)\subseteq\text{\rm DP}(E,F)
  \subseteq\text{\rm wDP}(E,F)\bigcap\text{\rm a-DP}(E,F), 
\end{equation}
and
\begin{equation}\label{dpwdp2}
  \text{\rm wDP}(E,F)\subseteq\text{\rm a-wDP}(E,F).  
\end{equation}
\noindent
The operator $L^1[0,1]\stackrel{T}{\to}\ell^\infty$, 
$Tf=\left(\int_0^1 f(t)r^+_k(t)\,dt \right)_{k=1}^\infty$ is \text{\rm wDP} 
yet not \text{\rm DP}, as $r_n\stackrel{\text{\rm w}}{\to}0$ and 
$\|Tr_n\|\ge \int_0^1 r_n(t)r^+_n(t)\,dt\equiv\frac{1}{2}$.
By \cite[Thm.4.1]{AqBo}, 
$\text{\rm a-DP}(E,F)=\text{\rm DP}(E,F)$ for all $F$ 
iff the lattice operations in $E$ are sequentially \text{\rm w}-continuous.
The identity operator: 
\begin{enumerate}[]
\item
$I:L^1[0,1]\to L^1[0,1]$ is \text{\rm a-DP} yet not \text{\rm DP};
\item
$I:c\to c$ is \text{\rm a-wDP} yet neither \text{\rm wDP} nor \text{\rm a-DP}.
\end{enumerate}
The domination property for \text{\rm a-DP}-operators 
was established in \cite[Cor.2.3]{AE}. We omit the proof of the 
next proposition as it is a modification of the proof 
of the Kalton--Saab domination theorem (cf. \cite[Thm.5.101]{AlBu}.

\begin{proposition}\label{dominated by awDP is awDP}
Any positive operator dominated by an \text{\rm a-wDP}-operator 
is likewise an \text{\rm a-wDP}-operator.
\end{proposition}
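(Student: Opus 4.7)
The proof would modify the Kalton--Saab domination scheme \cite[Thm.5.101]{AlBu}. Suppose $0\le S\le T$ with $T\in\text{\rm a-wDP}(E,F)$; to show that $S$ is a-wDP, fix an arbitrary disjoint weakly null sequence $(x_n)\subset E$ and a weakly null sequence $(f_n)\subset F'$, aiming to prove $f_n(Sx_n)\to 0$.

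The starting point is the positivity bound. Since $S\ge 0$, the adjoint $S'$ is positive, so $|S'f|\le S'|f|$ for every $f\in F'$; combined with $|g(x)|\le|g|(|x|)$ and the domination $S\le T$, this yields
$$
|f_n(Sx_n)|=|(S'f_n)(x_n)|\le(S'|f_n|)(|x_n|)=|f_n|(S|x_n|)\le|f_n|(T|x_n|).
$$
Note that $(|x_n|)\subset E_+$ inherits disjointness from $(x_n)$. The obstacle is that neither $(|x_n|)$ nor $(|f_n|)$ need be weakly null, so the a-wDP property of $T$ cannot be invoked on them directly. This is precisely the gap that Kalton and Saab had to close in the DP case, and their scheme adapts here.

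To close the gap, I would argue by contradiction: assume $|f_n(Sx_n)|\ge\varepsilon$ along a subsequence. Exploiting the already-disjoint structure of $(x_n)$, a Bessaga--Pelczynski style sliding-hump extraction produces a disjoint weakly null positive sequence $(y_k)$ in $E$ together with a weakly null sequence $(g_k)\subset F'$ for which the inequality above forces $|g_k(Ty_k)|\ge\varepsilon/2$, contradicting $T\in\text{\rm a-wDP}(E,F)$. The delicate step, and the main obstacle, is this simultaneous extraction: one must replace the moduli by genuine weakly null witnesses while preserving the operator-theoretic inequality. This is carried out by combining Riesz--Kantorovich identities in $F'$ with the fact that a bounded disjoint sequence in a Banach lattice admits, along a subsequence, a weakly null representative agreeing with it up to a norm-negligible perturbation. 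Once the witnesses are in place, the contradiction with the a-wDP property of $T$ is immediate.
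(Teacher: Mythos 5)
The paper itself omits the proof, saying only that it is a modification of the Kalton--Saab argument, so your sketch has to be judged on whether it actually closes the hard step of that argument --- and it does not. Your opening reduction $|f_n(Sx_n)|\le|f_n|(S|x_n|)\le|f_n|(T|x_n|)$ is correct and is the right starting point. But your diagnosis of the obstacle is half wrong: the sequence $(|x_n|)$ \emph{is} automatically disjoint and weakly null, since it is a disjoint sequence contained in the solid hull of the relatively weakly compact set $\{x_n:n\in\mathbb{N}\}\cup\{0\}$ (every disjoint sequence in the solid hull of a relatively weakly compact set is weakly null, \cite[Thm.4.34]{AlBu}). The only genuine difficulty is $(|f_n|)$, which need not be weakly null because $(f_n)$ is not assumed disjoint in $F'$.

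The step you propose to overcome this is not substantiated, and the auxiliary fact you invoke is false as stated: a norm bounded disjoint sequence in a Banach lattice need not admit, along any subsequence, a weakly null representative up to a norm-negligible perturbation --- the unit vector basis of $\ell^1$ is bounded and disjoint and has no weakly null subsequence at all. Moreover $(|f_n|)$ is not disjoint, so even a correct disjointness-based extraction in $F'$ would not apply to it. What is actually required is the a-wDP analogue of the technical lemma underlying \cite[Thm.5.101]{AlBu}: for a positive \text{\rm a-wDP} operator $T$, a disjoint weakly null sequence $(x_n)$ in $E$ and a weakly null sequence $(f_n)$ in $F'$, one has $|f_n|(T|x_n|)\to 0$. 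Its proof runs through the Riesz--Kantorovich formula together with the Dodds--Fremlin/Meyer-Nieberg almost-order-boundedness estimates, checking that the disjointness of $(x_n)$ survives each reduction; it is not a Bessaga--Pelczynski sliding hump. Until that lemma is established, the contradiction you announce at the end is not obtained, so the proposal has a genuine gap precisely at the point where the Kalton--Saab machinery has to be deployed.
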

\noindent
We also omit the straightforward proof of the following fact.

\begin{lemma}\label{DP are closed}
The spaces $\text{\rm a-DP}(E,F)$ and $\text{\rm a-wDP}(E,F)$
are both closed in $\text{\rm L}(E,F)$ under the operator norm.
\end{lemma}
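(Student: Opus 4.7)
The plan is to mimic the closure arguments used in Lemmas~\ref{alimited are closed} and \ref{a-G are closed} almost verbatim; only the test objects and the target mode of convergence need to be changed. In both cases I will start with a sequence $(T_k)$ in the relevant class that converges in operator norm to some $T\in\text{\rm L}(E,F)$, take the appropriate type of null sequence(s), and split the quantity to be estimated into a piece controlled by the defining property of $T_k$ and a piece controlled by $\|T-T_k\|$.

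For $\text{\rm a-DP}(E,F)$, let $(x_n)$ be a disjoint $\text{\rm w}$-null sequence in $E$, and recall that weakly null sequences in a Banach space are norm bounded, say $\|x_n\|\le M$. Given $\varepsilon>0$ I would pick $k$ with $\|T_k-T\|\le\varepsilon$, and then use $T_k\in\text{\rm a-DP}(E,F)$ to obtain $n_0$ with $\|T_kx_n\|\le\varepsilon$ for $n\ge n_0$; the triangle inequality then yields
$$
   \|Tx_n\|\le\|T_kx_n\|+\|(T-T_k)x_n\|\le\varepsilon+M\varepsilon
   \ \ \ \ (n\ge n_0),
$$
and arbitrariness of $\varepsilon$ gives $\|Tx_n\|\to 0$, so $T\in\text{\rm a-DP}(E,F)$.

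For $\text{\rm a-wDP}(E,F)$, I would take a $\text{\rm w}$-null $(f_n)\subseteq F'$ and a disjoint $\text{\rm w}$-null $(x_n)\subseteq E$, and use norm boundedness of both sequences, say $\|f_n\|\le N$ and $\|x_n\|\le M$. Given $\varepsilon>0$, choose $k$ with $\|T_k-T\|\le\varepsilon$ and then $n_0$ so that $|f_n(T_kx_n)|\le\varepsilon$ for $n\ge n_0$ (available since $T_k$ is a-wDP). Then
$$
   |f_n(Tx_n)|\le|f_n(T_kx_n)|+\|f_n\|\|T-T_k\|\|x_n\|\le\varepsilon+NM\varepsilon
   \ \ \ \ (n\ge n_0),
$$
so $f_n(Tx_n)\to 0$ and $T\in\text{\rm a-wDP}(E,F)$.

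There is no serious obstacle here: the argument is the same two-term triangle inequality used in the limited/Grothendieck cases, and the only ingredient beyond that is the standard fact that weakly null sequences in a Banach space are norm bounded, which is exactly what allows the $\|T-T_k\|$-term to be absorbed into an $\varepsilon$-estimate. This is why the author labels the result straightforward and omits the proof.
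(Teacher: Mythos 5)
Your proof is correct and is exactly the standard perturbation argument the paper has in mind: the authors omit the proof as ``straightforward'' precisely because it repeats, mutatis mutandis, the arguments of Lemmas~\ref{alimited are closed} and \ref{a-G are closed}, which is what you do. The only ingredients are norm boundedness of weakly null sequences and the two-term triangle inequality, both of which you supply, so nothing is missing.
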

\noindent
By \cite[Cor.2.3]{AE}, Proposition \ref{dominated by awDP is awDP},
and Lemma \ref{DP are closed}, the next result follows from Theorem \ref{reg op banach lattice}. 

\begin{corollary}\label{P-norm in DP}
If $F$ is Dedekind complete then 
$\text{\rm r-a-DP}(E,F)$ and\\ $\text{\rm r-a-wDP}(E,F)$
are both Banach lattices under their enveloping norms.
\end{corollary}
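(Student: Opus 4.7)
The plan is to verify the three hypotheses of Theorem \ref{reg op banach lattice} for each of the two classes $\text{\rm a-DP}(E,F)$ and $\text{\rm a-wDP}(E,F)$, and then invoke that theorem directly. The three hypotheses are: (a) the class is a linear subspace of $\text{\rm L}(E,F)$; (b) the class is closed in the operator norm; (c) the class satisfies the domination property.

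For (a), I would note that both classes are subspaces by a one-line check: if $T_1,T_2$ lie in $\text{\rm a-DP}(E,F)$ and $(x_n)$ is disjoint \text{\rm w}-null in $E$, then $\|T_1x_n\|\to 0$ and $\|T_2x_n\|\to 0$, so $\|(T_1+T_2)x_n\|\to 0$; closure under scalars is trivial. The argument for $\text{\rm a-wDP}(E,F)$ is identical after replacing $\|T_ix_n\|\to 0$ by $f_n(T_ix_n)\to 0$ for an arbitrary \text{\rm w}-null sequence $(f_n)\subseteq F'$. For (b), both classes are closed in the operator norm by Lemma \ref{DP are closed}. For (c), the domination property for $\text{\rm a-DP}$ is supplied by \cite[Cor.2.3]{AE}, and for $\text{\rm a-wDP}$ by Proposition \ref{dominated by awDP is awDP}.

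Having assembled these ingredients, I would apply Theorem \ref{reg op banach lattice} with ${\cal P}=\text{\rm a-DP}(E,F)$ and then with ${\cal P}=\text{\rm a-wDP}(E,F)$. Since $F$ is assumed Dedekind complete, each application yields that $\text{\rm r-}{\cal P}(E,F)$ is a Dedekind complete Banach lattice under its enveloping norm, which is the conclusion of the corollary.

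There is no real obstacle here: all the substantive work has been pushed into the cited domination theorems and into Theorem \ref{reg op banach lattice} itself. The only point worth a sentence in the write-up is the verification of (a), since (b) and (c) are cited verbatim; the proof will therefore amount essentially to a two-line reference to Theorem \ref{reg op banach lattice} together with the three supporting facts.
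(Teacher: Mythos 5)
Your proposal is correct and follows exactly the paper's route: the paper derives the corollary from Theorem \ref{reg op banach lattice} by citing \cite[Cor.2.3]{AE} and Proposition \ref{dominated by awDP is awDP} for the domination property and Lemma \ref{DP are closed} for norm-closedness, just as you do. Your extra sentence checking that the two classes are linear subspaces is a harmless (and reasonable) addition that the paper leaves implicit.
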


\subsection{}
Finally, we discuss once more the Krengel example (cf. \cite[Ex.5.6]{AlBu}. 

\begin{example}\label{Krengel}
{\em Let $A_n$ be a $2^n\times 2^n$ matrix constructed inductively:
$$
A_1=\left[
\begin{matrix}
1 & 1 \\
1 & -1
\end{matrix}
\right] \quad \text{\rm and} \quad A_{n+1}=\left[
\begin{matrix}
A_n & A_n \\
A_n & -A_n
\end{matrix}
\right].
$$
For each $n$,  let $T_n:\ell^2_{2^n}\to\ell^2_{2^n}$ be an isometry
defined by the orthogonal matrix $2^{-\frac{n}{2}}A_n$. 
Then $|T_n|\in\text{\rm L}(\ell^2_{2^n}) $ is the operator, 
whose $2^n\times 2^n$ coordinate matrix has all entries equal to $2^{-\frac{n}{2}}$, 
in particular $\||T_n|\|=2^{\frac{n}{2}}$ for all $n$.
Consider the $c_0$-direct sum  
$E:=(\oplus_{n=1}^{\infty}\ell^2_{2^n})_0$. Note that $E$ 
has $\text{\rm o}$-continuous norm and hence is Dedekind complete. 
Define $T:E\to E$, by
\begin{equation}\label{1kr}
   Tx:=(2^{-\frac{n}{2}}T_n x_n) \ \ \ \ (x=(x_1,x_2,\dots,x_n,\dots)\in E).
\end{equation}
It is straightforward to see that $T\in\text{\rm K}(E)$, and then $T\in\text{\rm a-G}(E)$
by (\ref{limgr1}), and $T\in\text{\rm a-DP}(E)$ by (\ref{dpwdp1}).
The modulus $|T|$ exists and is given by
\begin{equation}\label{2kr}
   |T|x=(2^{-\frac{n}{2}}|T_n|x_n) \ \ \ \ (x\in E).
\end{equation}
However, $|T|\not\in\text{\rm a-G}(E)$. Indeed, consider
a disjoint $\text{\rm w}^\ast$-null sequence $(f_n)$ in 
$E'=(\oplus_{n=1}^{\infty}\ell^2_{2^n})_1$, where all 
terms in $f_n$ are zero, except the $n$-th term which 
is equal to $(2^{-\frac{n}{2}},2^{-\frac{n}{2}},\dots,2^{-\frac{n}{2}})\in\ell^2_{2^n}$.
Since $|T|'\in\text{\rm L}((\oplus_{n=1}^{\infty}\ell^2_{2^n})_1)$
satisfies $|T|'y=(2^{-\frac{n}{2}}|T_n|'y_n)$ for all $y\in E'$,
where the coordinate matrix of $|T_n|'$ has all entries equal 
to $2^{-\frac{n}{2}}$, then all terms of $|T|'f_n$ are zero, 
except the $n$-th term which is equal to 
$(2^{-\frac{n}{2}},2^{-\frac{n}{2}},\dots,2^{-\frac{n}{2}})\in\ell^2_{2^n}$.
Take $g\in E''=(\oplus_{k=1}^{\infty}\ell^2_{2^k})_\infty$ by letting each
$k$-th term of $g$ equals to 
$(2^{-\frac{k}{2}},2^{-\frac{k}{2}},\dots,2^{-\frac{k}{2}})\in\ell^2_{2^k}$, then 
$$
   g(|T|'f_n)=\sum\limits_{i=1}^{2^n}2^{-\frac{n}{2}}\cdot 2^{-\frac{n}{2}}=
   \sum\limits_{i=1}^{2^n}2^{-n}=1\not\to 0,
$$
and hence $T\not\in\text{\rm a-G}(E)$. In particular, 
$T\not\in\text{\rm a-Lm}(E)$ by (\ref{limgr1}). 
Now, take a disjoint $\text{\rm w}$-null sequence $(x_n)$ in 
$E=(\oplus_{n=1}^{\infty}\ell^2_{2^n})_0$, whose 
terms are zero, except the $n$-th one which 
is equals to $(2^{-\frac{n}{2}},2^{-\frac{n}{2}},\dots,2^{-\frac{n}{2}})\in\ell^2_{2^n}$.
Then $|T|x_n$ has all the terms zero, 
except the $n$-th term which is equal to 
$(2^{-\frac{n}{2}},2^{-\frac{n}{2}},\dots,2^{-\frac{n}{2}})$.
Thus $\||T|x_n\|=1$ for all $n$, and hence $|T|\not\in\text{\rm a-DP}(E)$.

Since $\text{\rm a-G}$- and $\text{\rm a-DP}$-operators satisfy the domination property,
it follows from Assertion \ref{prop elem}~(ii) that 
$T\in\text{\rm a-G}_r((\oplus_{n=1}^{\infty}\ell^2_{2^n})_0)\setminus
  \text{\rm r-a-G}((\oplus_{n=1}^{\infty}\ell^2_{2^n})_0)$ and\\ 
$T\in\text{\rm a-DP}_r((\oplus_{n=1}^{\infty}\ell^2_{2^n})_0)\setminus
  \text{\rm r-a-DP}((\oplus_{n=1}^{\infty}\ell^2_{2^n})_0)$.}
\end{example}

{\tiny 
}

\end{document}